\begin{document}
\newenvironment{proof}
               {\begin{sloppypar} \noindent{\bf Proof.}}
               {\hspace*{\fill} $\square$ \end{sloppypar}}
\newtheorem{atheorem}{\bf \temp}[section]
\newtheorem{thm}[atheorem]{Theorem}
\newtheorem{cor}[atheorem]{Corollary}
\newtheorem{lem}[atheorem]{Lemma}
\newtheorem{pro}[atheorem]{Property}
\newtheorem{prop}[atheorem]{Proposition}
\newtheorem{de}[atheorem]{Definition}
\newtheorem{rem}[atheorem]{Remark}
\newtheorem{fac}[atheorem]{Fact}
\newtheorem{ex}[atheorem]{Example}
\newtheorem{pr}[atheorem]{Problem}
\newtheorem{cla}[atheorem]{Assertion}

\title{Augmentation Quotients for Burnside Rings of some Finite $p$-Groups\footnote{Supported
by the NSFC (No. 11401155).}}
\author{\small Shan Chang \quad changshan@hfut.edu.cn}
\date{}
\maketitle
\vspace{-1cm}
\begin{center}
{\small School of Mathematics, Hefei University of Technology, Hefei 230009, China}
\end{center}

\begin{abstract}
Let $G$ be a finite group, $\Omega(G)$ be its Burnside ring, and $\Delta(G)$ its augmentation ideal.
Denote by $\Delta^n(G)$ and $Q_n(G)$ the $n$-th power of $\Delta(G)$ and the $n$-th consecutive quotient
group $\Delta^n(G)/\Delta^{n+1}(G)$, respectively. This paper provides an explicit $\mathbb{Z}$-basis
for $\Delta^n(\mathcal{H})$ and determine the isomorphism class of $Q_n(\mathcal{H})$ for each positive
integer $n$, where $\mathcal{H}=\langle g,h |\, g^{p^m}=h^p=1, h^{-1}gh=g^{p^{m-1}+1}\rangle$,
$p$ is an odd prime. \\
{\bf Keywords:} finite $p$-group, Burnside ring, augmentation ideal, augmentation quotient \\
{\bf MSC (2010):} 16S34, 20C05
\end{abstract}

\section{Introduction}
Let $G$ be a finite group. A $G$-{\it set} is a finite set $X$ together with an action of
$G$ on $X$:
\begin{equation}
G\times X \longrightarrow X, \quad (g,x)\mapsto gx, \quad g\in G, x\in X.
\end{equation}
Two $G$-sets $X$ and $Y$ are said to be {\it isomrphic} (denoted by $X \cong Y$), if there
exists a bijective map $f: X\longrightarrow Y$ such that
\begin{equation}\label{ee}
f(gx)=gf(x), \quad \forall\, g\in G,\, x\in X.
\end{equation}
It is easy to verify that isomorphism of $G$-sets is an equivalence relation. The equivalence classes
are called isomorphism classes. The isomorphism class of $X$ is denoted by $[X]$. The sum $[X]+[Y]$
of two isomorphism classes $[X]$ and $[Y]$
is defined by
\begin{equation}
[X]+[Y]=[X\sqcup Y],
\end{equation}
where $X\sqcup Y$ is the disjoint union of $X$ and $Y$, which is also a $G$-set in the canonical way.

The {\it Burnside ring} $\Omega(G)$ is the group completion of the monoid (under sum) of isomorphism
classes of $G$-sets. Its multiplication is induced by cartesian product of finite sets. Note that the
cartesian product $X\times Y$ is a $G$-set via coordinating action. By \cite{M}, $\Omega(G)$ is a
commutative ring with an identity element. Its underlying group is a finitely generated free abelian
group with basis the isomorphism classes of transitive $G$-sets. Hence, by \cite{M}, its free rank is
equal to the number of conjugacy classes of subgroups of $G$.

The number of fixed points of $G$-set induces a ring homomorphism
\begin{equation}
\phi: \Omega(G) \longrightarrow \mathbb{Z}.
\end{equation}
This homomorphism is called the {\it augmentation map}. Its kernel $\Delta(G)$ is called the
{\it augmentation ideal} of $\Omega(G)$. Denote by $\Delta^n(G)$ the $n$-th power of $\Delta(G)$.
The $n$-th {\it augmentation quotient group} of $\Omega(G)$ is defined as
\[Q_{n}(G)=\Delta^n(G)/\Delta^{n+1}(G).\]

The problem of determining the structure of $\Delta^n(G)$ and $Q_{n}(G)$ is an interesting topic
in group ring theory. In \cite{WT}, Haibo Wu and Guoping Tang determined the isomorphism class of
$Q_n(G)$ for all finite abelian groups and for any positive integer $n$. Gaohua Tang, Yu Li and
Yansheng Wu settled the problem completely for certain nonabelian 2-groups in \cite{TLW}. In \cite{C0},
Shan Chang provided an explicit $\mathbb{Z}$-basis of $\Delta^n(\mathcal{D})$ and determined the
isomorphism class of $Q_n(\mathcal{D})$ for each positive integer n, where $\mathcal{D}$ is the
generalized dihedral group of a finite abelian group of odd order. However, the isomorphism class
of $Q_{n}(G)$ remained unclear for other non-abelian groups.

Let $p$ be a fixed odd prime. For each positive integer $m$ greater than or equal to 2, it is
well known that there is exactly one isomorphism class of nonabelian $p$-group of order $p^{m+1}$
which have a cyclic subgroup of index $p$. Denote this nonabelian $p$-group by $\mathcal{H}$.
Its presentation is
\begin{equation}
\mathcal{H}=\langle a,b |\, a^{p^m}=b^p=1, b^{-1}ab=a^{p^{m-1}+1}\rangle.
\end{equation}
The goal of this article is to provide an explicit $\mathbb{Z}$-basis for $\Delta^n(\mathcal{H})$
and determine the isomorphism class of $Q_{n}(\mathcal{H})$ for each positive integer $n$. Yanan Wen
and Shan Chang settled this problem for $m=2$ in \cite{WC}, thus we assume $m\geqslant 3$ in the sequel.

The result also yields $\mbox{Tor}^{\Omega(\mathcal{H})}_1(\Omega(\mathcal{H})/\Delta^n(\mathcal{H}),
\Omega(\mathcal{H})/\Delta(\mathcal{H}))$ because for any finite group $G$, $Q_n(G) \cong
\mbox{Tor}^{\Omega(G)}_1(\Omega(G)/\Delta^n(G), \Omega(G)/\Delta(G))$.

Two related problems of recent interest have been to investigate the augmentation ideals and their
consecutive quotients for integral group rings and representation rings of finite groups. These
problems have been well studied in \cite{P}, \cite{T1}, \cite{T2}, \cite{BT}, \cite{T3}, \cite{CT},
\cite{CCT}, \cite{C1} and \cite{C2}.

\section{Preliminaries}
In this section, we provide some useful results about $\Omega(G)$, $\Delta^n(G)$, $Q_n(G)$ and
finitely generated free abelian groups. Indeed, lemma \ref{th11}, lemma \ref{th12}, theorem
\ref{th13} and corollary \ref{th14} were found in \cite{M}, lemma \ref{th2} and lemma \ref{th8}
were proved in \cite{C0}.

Let $X$ be a $G$-set. There is an equivalence relation on $X$ given by saying $x$ is related to
$y$ if there exists $g\in G$ with $gx=y$. The equivalence classes are called {\it orbits}. It is
easy to see that the orbits of $x$ is
\begin{equation}
Gx=\{gx \,|\, g\in G\}.
\end{equation}
A $G$-set is {\it transitive} if it has only one orbit. For instance, if $x\in X$, the orbit $Gx$
is a transitive $G$-set. As another example, if $K$ is a subgroup of $G$, the set $G/K$ of left
cosets of $K$ is a transitive $G$-set under $g(hK)=(gh)K$.

There is a standard form for each transitive $G$-set. The {\it stabilizer} of an element $x\in X$ is
the set
\begin{equation}
G_x=\{g\in G \,|\, gx=x\}.
\end{equation}
It is easy to verify that the stabilizer $G_x$ is a subgroup of $G$. The following lemma shows that
every transitive $G$-set is isomorphic to $G/K$ for some subgroup $K$ of $G$.
\begin{lem}\label{th11}
For any $G$-set $X$ and $x\in X$,
\begin{equation}
Gx\cong G/G_x, \quad gx\mapsto gG_x.
\end{equation}
\end{lem}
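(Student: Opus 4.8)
The plan is to exhibit the map in the reverse direction and verify it is a well-defined bijection compatible with the two $G$-actions. Concretely, I would define $f\colon Gx \to G/G_x$ by $f(gx)=gG_x$, where $G/G_x$ carries the left-multiplication action $h\cdot(gG_x)=(hg)G_x$. The four things to check are that $f$ is well defined, injective, surjective, and $G$-equivariant in the sense of (\ref{ee}). Its inverse will then be exactly the assignment $gG_x \mapsto gx$ recorded in the statement.

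First I would handle well-definedness, which I expect to be the only genuinely delicate point, since an element of the orbit $Gx$ may be written as $gx$ for many different $g$. Suppose $gx=g'x$ for $g,g'\in G$. Then $(g'^{-1}g)x=x$, so by the definition of the stabilizer $g'^{-1}g\in G_x$, whence $gG_x=g'G_x$. Thus $f$ does not depend on the chosen representative. Reading the same chain of implications backwards yields injectivity: if $gG_x=g'G_x$ then $g'^{-1}g\in G_x$, so $(g'^{-1}g)x=x$ and hence $gx=g'x$.

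Surjectivity is immediate, because every coset has the form $gG_x$ for some $g\in G$, and this coset is the image of the element $gx\in Gx$. For $G$-equivariance I would take $h\in G$ and compute
\[
f\bigl(h\cdot(gx)\bigr)=f\bigl((hg)x\bigr)=(hg)G_x=h\cdot(gG_x)=h\cdot f(gx),
\]
which is precisely condition (\ref{ee}). Combining the four verifications shows that $f$ is an isomorphism of $G$-sets.

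The main obstacle, such as it is, is purely bookkeeping: recognizing that both well-definedness and injectivity are governed by the single equivalence $gx=g'x \Longleftrightarrow g'^{-1}g\in G_x$, so that the entire content reduces to the defining property of the stabilizer together with the coset description of $G/G_x$. No deeper structure of $\Omega(G)$ or of $\mathcal{H}$ enters here; this is the standard orbit--stabilizer correspondence recast as an isomorphism of $G$-sets.
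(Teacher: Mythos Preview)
Your argument is correct and is exactly the standard orbit--stabilizer verification. One small wording slip: the map you actually define, $f(gx)=gG_x$, is precisely the map displayed in the statement, not its ``reverse,'' so the sentence about the inverse being ``the assignment recorded in the statement'' has the directions swapped. Mathematically nothing is affected.

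As for comparison with the paper: the paper does not give its own proof of this lemma. It explicitly records that Lemma~\ref{th11} (together with Lemma~\ref{th12}, Theorem~\ref{th13}, and Corollary~\ref{th14}) is taken from \cite{M}, so there is no in-text argument to compare against. Your proof is the canonical one and would be what any reader reconstructs from the citation.
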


Note that $G_{gx}=gG_xg^{-1}$ for each $x\in X$ and $g\in G$. Moreover, any isomorphism
of $G$-sets $f:X\longrightarrow Y$ preserves stabilizers: $G_x=G_{f(x)}$. From these two facts we get
the following lemma.
\begin{lem}\label{th12}
Let $K$ and $L$ be two subgroups of $G$. Then $G/K\cong G/L$ if and only if $K$ and $L$ are conjugate
in $G$.
\end{lem}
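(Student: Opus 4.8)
The plan is to prove both implications directly from the two facts recorded just before the statement, namely the transformation rule $G_{gx}=gG_xg^{-1}$ and the fact that any $G$-set isomorphism $f:X\to Y$ satisfies $G_x=G_{f(x)}$, supplemented by Lemma~\ref{th11}. The key observation underpinning everything is that in the coset space $G/K$, equipped with the action $g(hK)=(gh)K$, the stabilizer of the base point $eK$ is exactly $K$; combining this with the transformation rule gives $G_{aK}=aKa^{-1}$ for every $a\in G$.

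For the forward direction, suppose $f:G/K\to G/L$ is an isomorphism of $G$-sets. I would look at the image $y=f(eK)$ of the base point. Preservation of stabilizers gives $K=G_{eK}=G_{y}$. Writing $y=aL$ for some $a\in G$ and applying the stabilizer computation in $G/L$ yields $G_y=aLa^{-1}$, so $K=aLa^{-1}$ and the subgroups are conjugate.

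For the converse, assume $L=gKg^{-1}$ for some $g\in G$. Here I would produce a single element of $G/K$ whose stabilizer is $L$: taking $x=gK$, the computation above gives $G_x=gKg^{-1}=L$. Lemma~\ref{th11} then identifies the orbit $Gx$ with $G/G_x=G/L$, and since $G/K$ is transitive we have $Gx=G/K$, whence $G/K\cong G/L$. Alternatively one can write the isomorphism explicitly as $xK\mapsto xg^{-1}L$ and check well-definedness and equivariance by hand.

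This argument is short and essentially formal, so there is no serious obstacle; the only point demanding care is bookkeeping the direction of conjugation --- keeping straight whether the stabilizer of $aK$ is $aKa^{-1}$ or $a^{-1}Ka$, and correspondingly which of $K$, $L$ is conjugated by which element --- since an error there would invert the statement without otherwise disturbing the logic.
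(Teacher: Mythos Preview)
Your argument is correct and is precisely the proof the paper has in mind: the text immediately preceding Lemma~\ref{th12} records the two facts $G_{gx}=gG_xg^{-1}$ and $G_x=G_{f(x)}$ and then says the lemma follows from them, and your write-up simply fills in those details (together with the appeal to Lemma~\ref{th11} for the converse). There is nothing to add.
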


Assembling above lemmas, we have proved the following theorem.
\begin{thm}\label{th13}
Let $\mathcal{K}$ be a set of subgroups of $G$, one chosen from each conjugacy class of subgroups of $G$.
Then $\Omega(G)$ is, additively, the free abelian based on $\big\{ [G/K] \,|\, K\in \mathcal{K} \big\}$.
\end{thm}

Recall that $\Delta(G)$ is the kernel of $\phi:\Omega(G)\longrightarrow\mathbb{Z}$ which sending
$[X]$ to the number $\#(X)$ of fixed points of $X$. Brief calculations show that
\begin{equation}
\#(G/K)=\left\{\begin{array}{ll}
1, & \mbox{if } K=G, \\
0, & \mbox{if } K<G.
\end{array}\right.
\end{equation}
From this we get the following corollary.
\begin{cor}\label{th14}
For each finite group $G$, the underlying group of $\Delta(G)$ is the free abelian group based on
$\big\{ [G/K] \,|\, K\in \mathcal{K}, K<G \big\}$.
\end{cor}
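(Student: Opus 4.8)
The plan is to exploit the explicit free-abelian basis of $\Omega(G)$ furnished by Theorem \ref{th13} together with the fixed-point values of $\phi$ already computed, reducing the statement to a one-line computation in coordinates. The key observation is that $G$ forms a conjugacy class of subgroups all by itself, so $G \in \mathcal{K}$ and $[G/G]$ is one of the basis vectors; it is the only basis vector on which $\phi$ is nonzero.

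First I would write an arbitrary element of $\Omega(G)$ in the basis $\{[G/K] \mid K\in\mathcal{K}\}$ as $\xi=\sum_{K\in\mathcal{K}} n_K [G/K]$ with $n_K\in\mathbb{Z}$. Since $\phi$ is a homomorphism of abelian groups, additivity gives $\phi(\xi)=\sum_{K\in\mathcal{K}} n_K\,\#(G/K)$. Substituting the values $\#(G/G)=1$ and $\#(G/K)=0$ for $K<G$ collapses this sum to $\phi(\xi)=n_G$. Hence $\xi\in\Delta(G)=\ker\phi$ if and only if $n_G=0$, that is, if and only if $\xi$ lies in the subgroup spanned by $\{[G/K]\mid K\in\mathcal{K},\,K<G\}$. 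This settles both inclusions simultaneously and identifies $\Delta(G)$ with the $\mathbb{Z}$-span of the proper-subgroup basis vectors.

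To upgrade "spanning set" to "basis," I would note that $\{[G/K]\mid K\in\mathcal{K},\,K<G\}$ is a subset of the $\mathbb{Z}$-basis of $\Omega(G)$ given by Theorem \ref{th13}; any subset of a basis of a free abelian group is $\mathbb{Z}$-linearly independent and therefore is a basis of the subgroup it generates. There is no genuine obstacle in this argument: the entire content is the computation of $\phi$ on basis vectors (already carried out in the excerpt), and the only point requiring a moment's care is that the single vector $[G/G]$ carries all the augmentation, which is precisely what makes $\phi$ surjective and confines its kernel to the part spanned by proper subgroups.
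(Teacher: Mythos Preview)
Your argument is correct and matches the paper's own reasoning: the paper simply notes that $\#(G/G)=1$ and $\#(G/K)=0$ for $K<G$, and records the corollary as an immediate consequence of Theorem~\ref{th13}, which is exactly the coordinate computation you carry out. There is nothing to add or correct.
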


The multiplication in $\Omega(G)$ is completely determined by the product
\begin{equation}
[G/K][G/L]=[(G/K)\times (G/L)],
\end{equation}
where $K,L$ are subgroups of $G$. The following lemma tackles this product.
\begin{lem}\label{th2}
Let $K$ be a subgroup of $G$, $L$ be a normal subgroup of $G$. Then
\begin{equation}
[G/K][G/L]=\frac{|G|\cdot |K\cap L|}{|K|\cdot |L|}[G/(K\cap L)]=\frac{|G|}{|KL|}[G/(K\cap L)].
\end{equation}
\end{lem}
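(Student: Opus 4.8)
The plan is to compute the cartesian product $(G/K)\times(G/L)$ directly by decomposing it into $G$-orbits and then applying Lemma \ref{th11} and Lemma \ref{th12}. The relevant $G$-action is the coordinate action $g\cdot(aK,bL)=(gaK,gbL)$, so the entire calculation reduces to understanding the orbits and their stabilizers, after which the coefficient is extracted by counting.

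First I would compute the stabilizer of a typical element $(aK,bL)$. By definition this stabilizer is $aKa^{-1}\cap bLb^{-1}$. Here the normality of $L$ enters decisively: since $bLb^{-1}=L$, the stabilizer equals $aKa^{-1}\cap L$, and using normality again, $aKa^{-1}\cap L=aKa^{-1}\cap aLa^{-1}=a(K\cap L)a^{-1}$, which is conjugate to $K\cap L$. Hence by Lemma \ref{th11} the orbit through $(aK,bL)$ is isomorphic to $G/(a(K\cap L)a^{-1})$, and by Lemma \ref{th12} this is isomorphic to $G/(K\cap L)$. Thus every orbit is isomorphic to $G/(K\cap L)$, so $[(G/K)\times(G/L)]$ is an integer multiple of $[G/(K\cap L)]$.

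Next I would determine that integer by a counting argument. The cardinality of $(G/K)\times(G/L)$ is $\frac{|G|}{|K|}\cdot\frac{|G|}{|L|}$, while each orbit, being isomorphic to $G/(K\cap L)$, has cardinality $\frac{|G|}{|K\cap L|}$. Dividing the total cardinality by the orbit size, the number of orbits is $\frac{|G|\cdot|K\cap L|}{|K|\cdot|L|}$, which is exactly the claimed coefficient. Finally, the second equality follows from the standard identity $|KL|=\frac{|K|\cdot|L|}{|K\cap L|}$, valid since $L$ is normal and hence $KL$ is a subgroup; this yields $\frac{|G|}{|KL|}=\frac{|G|\cdot|K\cap L|}{|K|\cdot|L|}$.

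I expect the only genuine subtlety to be the verification that all stabilizers are conjugate to the single subgroup $K\cap L$. This is precisely where normality of $L$ is essential: without it, the intersection $aKa^{-1}\cap bLb^{-1}$ would vary in conjugacy type as $(aK,bL)$ ranges over the product, the orbits would no longer all be isomorphic, and the product would fail to collapse to a multiple of a single basis element $[G/(K\cap L)]$. Once this uniformity is established, the remaining orbit counting is entirely routine.
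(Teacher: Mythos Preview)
Your argument is correct. The paper itself does not supply a proof of this lemma; in the preliminaries it explicitly states that Lemma~\ref{th2} was proved in \cite{C0} and simply quotes the result. Your orbit--stabilizer computation, using normality of $L$ to force every stabilizer in $(G/K)\times(G/L)$ to be conjugate to $K\cap L$ and then extracting the multiplicity by dividing cardinalities, is the standard route and almost certainly coincides with what appears in that reference. One minor remark: the identity $|KL|=|K|\,|L|/|K\cap L|$ holds for any pair of finite subgroups as a set-theoretic count of $KL$, so your appeal to normality of $L$ for the final equality is not strictly necessary, though it does no harm.
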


It is easy to see that $\Delta^n(G)$ is a finitely generated free abelian group for any positive
integer $n$. However, it is usually difficult to write down explicitly a basis of $\Delta^n(G)$,
even for a finite $p$-group. The following lemma sheds some light on its free rank.
\begin{lem}\label{th8}
For each positive integer $n$, $Q_n(G)$ is a finite abelian group, hence $\Delta^n(G)$ has same free
rank as $\Delta(G)$.
\end{lem}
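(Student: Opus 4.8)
The plan is to pass to rational coefficients, where $\Omega(G)$ becomes semisimple, and deduce that all powers of the augmentation ideal coincide up to finite index. I would first reduce the whole statement to the single claim that $\Delta^n(G)\otimes\mathbb{Q}=\Delta(G)\otimes\mathbb{Q}$ as subspaces of $\Omega(G)\otimes\mathbb{Q}$ for every $n\geqslant 1$. Indeed, this equality forces $\mathrm{rank}\,\Delta^n(G)=\dim_{\mathbb{Q}}\big(\Delta^n(G)\otimes\mathbb{Q}\big)=\mathrm{rank}\,\Delta(G)$, which is the second assertion; and since $\Delta^{n+1}(G)$ and $\Delta^n(G)$ then have equal rank while both are finitely generated free abelian groups, the finitely generated abelian group $Q_n(G)=\Delta^n(G)/\Delta^{n+1}(G)$ has rank zero and is therefore finite, which is the first assertion.

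To establish the claim I would identify $\Omega(G)\otimes\mathbb{Q}$ by means of the table of marks. For each subgroup $K\leqslant G$, counting fixed points $\phi_K([X])=|X^K|$ defines a ring homomorphism $\Omega(G)\to\mathbb{Z}$, because $(X\times Y)^K=X^K\times Y^K$ and fixed points are additive on disjoint unions. Collecting one such map for each class $K\in\mathcal{K}$ gives a ring homomorphism $\Phi:\Omega(G)\to\mathbb{Z}^{|\mathcal{K}|}$. Ordering $\mathcal{K}$ by non-decreasing order of the subgroups, the matrix $\big(\phi_{K_i}([G/K_j])\big)$ is triangular with diagonal entries $\phi_{K_i}([G/K_i])=|N_G(K_i):K_i|\neq 0$, so it is invertible over $\mathbb{Q}$. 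Hence $\Phi\otimes\mathbb{Q}$ is an isomorphism and $\Omega(G)\otimes\mathbb{Q}\cong\mathbb{Q}^{|\mathcal{K}|}$ as $\mathbb{Q}$-algebras.

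Now I would invoke the fact that in a finite product of fields every ideal is idempotent. Since $\mathbb{Q}$ is flat over $\mathbb{Z}$, tensoring respects products of ideals, so $\Delta^n(G)\otimes\mathbb{Q}=\big(\Delta(G)\otimes\mathbb{Q}\big)^n$. Writing $I:=\Delta(G)\otimes\mathbb{Q}$, an ideal of the semisimple ring $\Omega(G)\otimes\mathbb{Q}$, I note that $I$ is generated by an idempotent and therefore satisfies $I^2=I$, whence $I^n=I$ for all $n\geqslant 1$. Therefore $\Delta^n(G)\otimes\mathbb{Q}=I=\Delta(G)\otimes\mathbb{Q}$, proving the claim and with it the lemma.

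The step I expect to be the main obstacle is the identification $\Omega(G)\otimes\mathbb{Q}\cong\mathbb{Q}^{|\mathcal{K}|}$, that is, verifying that the table of marks becomes invertible over $\mathbb{Q}$. This requires a careful ordering of the conjugacy classes of subgroups so that $\Phi$ is represented by a triangular matrix, together with the non-vanishing of the diagonal marks $|N_G(K_i):K_i|$; once semisimplicity is secured, the flatness of $\mathbb{Q}$ and the idempotence of ideals in a product of fields make the remainder purely formal.
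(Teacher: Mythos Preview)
Your argument is correct. The reduction to $\Delta^n(G)\otimes\mathbb{Q}=\Delta(G)\otimes\mathbb{Q}$ is the right move, and the proof of this via the mark homomorphism is the classical route: the triangularity of the table of marks (with diagonal entries $|N_G(K):K|>0$) gives the Burnside isomorphism $\Omega(G)\otimes\mathbb{Q}\cong\mathbb{Q}^{|\mathcal{K}|}$, after which idempotence of ideals in a finite product of fields does the rest. One small point worth making explicit in your triangularity step: when two representatives $K_i,K_j$ have the same order but $i\neq j$, the entry $\phi_{K_i}([G/K_j])$ vanishes because subconjugacy together with equal order would force conjugacy; this is what guarantees genuine triangularity rather than merely a block structure.

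As for comparison with the paper: the paper does not actually prove this lemma. It records it among the preliminaries and attributes the proof to reference~[4] (Chang, \emph{Czech.\ Math.\ J.}\ 2016). Your approach via rational semisimplicity of the Burnside ring is the standard one and is almost certainly what lies behind the cited reference; in any case it is a complete and self-contained justification, so nothing further is needed here.
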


At last, we recall a classical result about finitely generated free abelian groups.
\begin{lem}\label{th3}
Let $F$ be a finite generated free abelian group of rank $r$. If the $r$ elements $g_1,\ldots,g_r$
generate $F$, then they form a basis of $F$.
\end{lem}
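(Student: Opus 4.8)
The plan is to show that the surjection determined by the generators is in fact an isomorphism. First I would fix the standard basis $e_1,\ldots,e_r$ of $\mathbb{Z}^r$ and define the homomorphism $\pi:\mathbb{Z}^r\to F$ by $\pi(e_i)=g_i$. Because $g_1,\ldots,g_r$ generate $F$, the map $\pi$ is surjective, so the whole problem reduces to proving that $\pi$ is injective: once that is known, $\pi$ is an isomorphism carrying the standard basis onto $g_1,\ldots,g_r$, whence the latter form a basis of $F$.

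To establish injectivity I would pass to the short exact sequence $0\to K\to\mathbb{Z}^r\xrightarrow{\pi}F\to 0$ with $K=\ker\pi$. Since $F$ is free abelian, it is projective, so this sequence splits and yields $\mathbb{Z}^r\cong K\oplus F$. Comparing free ranks, which are additive on direct sums, gives $r=\operatorname{rank}(K)+r$, hence $\operatorname{rank}(K)=0$. I would then invoke the classical structural fact that every subgroup of a free abelian group is free abelian; applied to $K\leqslant\mathbb{Z}^r$, this forces $K$ to be free of rank $0$, i.e. $K=0$. Therefore $\pi$ is injective, hence an isomorphism, and the proof concludes.

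The only non-formal inputs are the two structural facts used above (that a free abelian group is projective, so the sequence splits, and that subgroups of free abelian groups are free), and I regard identifying the right place to apply them as the main conceptual point rather than a genuine obstacle. An alternative, entirely matrix-theoretic route avoids the splitting: expressing each $g_i$ in a fixed basis $f_1,\ldots,f_r$ of $F$ produces an integer matrix $A$, while the generation hypothesis produces an integer matrix $C$ with $CA=I_r$; taking determinants gives $\det(A)=\pm 1$, so $A\in GL_r(\mathbb{Z})$ and the $g_i$ form a basis. In that version the one step demanding care is setting up the relation $CA=I_r$ with the indices arranged correctly, though the computation itself stays routine.
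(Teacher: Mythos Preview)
Your argument is correct. Both routes you outline are standard and valid: the splitting argument uses only that free abelian groups are projective and that subgroups of free abelian groups are free, so the rank comparison forces $K=0$; the matrix-theoretic alternative is equally clean once one observes that the generation hypothesis yields an integer matrix $C$ with $CA=I_r$, whence $\det A\in\{\pm1\}$.

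There is nothing to compare against, however: the paper does not supply a proof of this lemma at all. It is stated in Section~2 merely as ``a classical result about finitely generated free abelian groups'' and left without argument, like the other preliminary facts in that section that are attributed to the references. So your proposal goes beyond what the paper provides rather than diverging from it.
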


\section{Necessary Tools}
In this section, we construct a basis of $\Delta(\mathcal{H})$ as a free abelian group. Then we
determine the multiplication in $\Delta(\mathcal{H})$.

Recall that the presentation of $\mathcal{H}$ is $\langle a,b |\, a^{p^m}=b^p=1, b^{-1}ab=a^{p^{m-1}+1}\rangle$.
The following lemma determines all proper subgroups of $\mathcal{H}$. For convenience, we fix the
following notation.
\begin{itemize}
\item For any positive integer $j$, set $\overline{j}=\{0,1,\ldots,j\}$, $\underline{j}=\{1,\ldots,j\}$.
\item Denote by $N_i$ the cyclic subgroup of $\mathcal{H}$ which generated by $a^{p^i}$, $i\in \overline{m}$.
\item For any subset $\Gamma\subset\Omega(\mathcal{H})$, denote by $\mathbb{Z}\Gamma$ the set of
all $\mathbb{Z}$-linear combinations of elements of $\Gamma$.
\item Denote by $C_p$ the cyclic group of order $p$.
\end{itemize}
\begin{lem}\label{th1}
Let $K$ be a proper subgroup of $\mathcal{H}$.
\begin{itemize}
\item[\ding{172}] If $K\subset N_0$, then $K=N_i$ for some integer $i\in \overline{m}$.
Moreover, $K$ is a normal subgroup of $\mathcal{H}$.
\item[\ding{173}] If $K\not\subset N_0$, then there are two integers $k,l$ with $k\in \underline{m}$,
$l\in \overline{p-1}$ such that
\begin{equation}
K=\bigcup_{j=0}^{p-1} (ba^{lp^{k-1}})^j N_k=\bigcup_{j=0}^{p-1} b^ja^{jlp^{k-1}}N_k.
\end{equation}
\end{itemize}
\end{lem}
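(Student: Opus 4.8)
The plan is to organize the argument around the normal cyclic subgroup $N_0=\langle a\rangle$, which has index $p$ in $\mathcal{H}$, together with the quotient map $\pi:\mathcal{H}\to\mathcal{H}/N_0\cong C_p$. A proper subgroup $K$ either lies inside $N_0$ or does not, and these alternatives are precisely cases \ding{172} and \ding{173}.

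For \ding{172}, I would use that $N_0$ is cyclic of order $p^m$, so its subgroup lattice is the chain $N_0\supset N_1\supset\cdots\supset N_m=\{1\}$, where $N_i$ is the unique subgroup of order $p^{m-i}$. Thus $K\subseteq N_0$ forces $K=N_i$ for some $i\in\overline{m}$. Normality is then immediate: each $N_i$ is characteristic in $N_0$ (being the unique subgroup of its order) and $N_0\trianglelefteq\mathcal{H}$, so $N_i\trianglelefteq\mathcal{H}$; alternatively one checks directly that $b^{-1}a^{p^i}b=(a^{p^{m-1}+1})^{p^i}=a^{p^i}a^{p^{m-1+i}}=a^{p^i}$ for $i\ge 1$, since $p^{m-1+i}\equiv 0\pmod{p^m}$.

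For \ding{173}, the key is that $\pi(K)$ is a nontrivial subgroup of $C_p$, hence all of $C_p$. Therefore $K\cap N_0=\ker(\pi|_K)$ has index $p$ in $K$; being a subgroup of $N_0$ it equals some $N_k$, and $|K|=p\,|N_k|=p^{m-k+1}<p^{m+1}$ forces $k\ge 1$, so $k\in\underline{m}$. Since $\pi(K)=C_p$, some element $b^s a^r\in K$ has $p\nmid s$; raising it to the power $c$ with $cs\equiv 1\pmod p$ and using $b^p=1$ produces an element of the form $ba^t\in K$. Because $N_k\trianglelefteq\mathcal{H}$ and $K/N_k$ is cyclic of order $p$ generated by $ba^tN_k$, I obtain the coset decomposition $K=\bigsqcup_{j=0}^{p-1}(ba^t)^jN_k$.

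The main obstacle, and the step demanding the most care, is normalizing $t$ to the form $lp^{k-1}$ with $l\in\overline{p-1}$. I would first derive the commutation identity $a^t b=b a^{t(p^{m-1}+1)}$ by induction from the defining relation, and use it to push all $b$'s to the left, obtaining $(ba^t)^j=b^j a^{t(1+\alpha+\cdots+\alpha^{j-1})}$ with $\alpha=p^{m-1}+1$. The arithmetic then rests on two congruences modulo $p^m$: $\alpha^i\equiv 1+ip^{m-1}$ and hence $\sum_{i=0}^{p-1}\alpha^i\equiv p$, the correction term $\tfrac{p(p-1)}{2}p^{m-1}$ vanishing mod $p^m$ because $p$ is odd. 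These give $(ba^t)^p=a^{tp}$, which lies in $K\cap N_0=N_k$, forcing $p^{k-1}\mid t$. Since $t$ is determined only modulo $p^k$ once one works inside the coset $ba^tN_k$, it reduces to $t=lp^{k-1}$ with $l\in\overline{p-1}$. The same $p$-adic bookkeeping (now noting $lp^{k-1}\cdot\tfrac{j(j-1)}{2}p^{m-1}\equiv 0\pmod{p^k}$ since $m\ge 2$) yields the second displayed equality $(ba^{lp^{k-1}})^jN_k=b^j a^{jlp^{k-1}}N_k$, completing the description.
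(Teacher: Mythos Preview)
Your argument is correct and follows essentially the same route as the paper's: use the index-$p$ normal subgroup $N_0$ and the quotient $\mathcal{H}/N_0\cong C_p$ to find a generator of $K/N_k$ of the shape $ba^tN_k$, then compute $(ba^t)^p=a^{tp}$ via the commutation identity to force $p^{k-1}\mid t$ and reduce $t$ modulo $p^k$. The only cosmetic difference is that the paper records the closed form $(ba^w)^j=b^j a^{w[\frac{j(j-1)}{2}p^{m-1}+j]}$ by induction, whereas you reach the same expression through the geometric sum $\sum_{i=0}^{j-1}\alpha^i$ with $\alpha=p^{m-1}+1$; these are equivalent modulo $p^m$.
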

\begin{proof}
\ding{172} It is a direct corollary of the presentation of $\mathcal{H}$. \ding{173} Note that $KN_0$
is a subgroup of $\mathcal{H}$ which contains $N_0$ properly. This implies $KN_0=\mathcal{H}$ since
$N_0$ is a maximal subgroup of $\mathcal{H}$. Due to \ding{172}, there exists an integer $k\in \overline{m}$
such that $K\cap N_0=N_k$. Then we have the following group isomorphism,
\begin{equation}
\begin{array}{ccccc}
K/N_k & \cong & KN_0/N_0=\mathcal{H}/N_0 & \cong & \langle b\rangle, \\
b^ua^vN_k & \mapsto & b^ua^vN_0=b^uN_0 & \mapsto & b^u,
\end{array}
\end{equation}
where $\langle b\rangle$ is the cyclic subgroup of $\mathcal{H}$ which generated by $b$. Let $ba^vN_k$ be the
inverse image of $b$ under above isomorphism. Then $K/N_k$ is a cyclic group of order $p$ which generated
by $ba^vN_k$. From this it follows that
\begin{equation}
K=\bigcup_{j=0}^{p-1} (ba^v N_k)^j=\bigcup_{j=0}^{p-1} (ba^v)^j N_k,
\end{equation}
where $(ba^vN_k)^p=(ba^v)^pN_k=N_k$, i.e. $(ba^v)^p$ lies in $N_k$. We claim $k\neq 0$, otherwise $N_0$ is
properly contained in $K$, which is a contradiction since $N_0$ is a maximal subgroup. Without loss of
generality, we can assume $v\in \overline{p^k-1}$ since $ba^vN_k$ depends only on the residue class of
$v$ modulo $p^k$. To finish the proof, we need the following assertion.
\begin{cla}\label{th5}
For any natural numbers $w$ and $j$, we have
\begin{equation}
(ba^w)^j=b^ja^{w\big[\frac{j(j-1)}{2}p^{m-1}+j\big]}.
\end{equation}
\end{cla}
\begin{proof}
We prove the assertion by induction on $j$. When $j=0$, it is trivial. If $j\geqslant 1$, assume the
assertion holds for $j-1$ and any natural number $w$. Note that $a^wb=ba^{w(p^{m-1}+1)}$. Hence,
\begin{eqnarray}
(ba^w)^j & = & b(a^wb)^{j-1}a^w=b(ba^{w(p^{m-1}+1)})^{j-1}a^w \nonumber\\
& = & bb^{j-1}a^{w(p^{m-1}+1)\big[\frac{(j-1)(j-2)}{2}p^{m-1}+j-1\big]}a^w \nonumber\\
& = & b^ja^{w\big[\big(\frac{(j-1)(j-2)}{2}+j-1\big)p^{m-1}+j-1\big]}a^v \nonumber\\
& = & b^ja^{w\big[\frac{j(j-1)}{2}p^{m-1}+j\big]},
\end{eqnarray}
as required.
\end{proof}

We return now to the proof of Lemma \ref{th1}. Due to Assertion \ref{th5}, we get
\begin{equation}
(ba^v)^p=b^pa^{v\big[\frac{p(p-1)}{2}p^{m-1}+p\big]}=a^{vp}.
\end{equation}
From this and the fact $(ba^v)^p$ lies in $N_k$ it follows that $v$ is a multiple of $p^{k-1}$.
Set $v=lp^{k-1}$. Then $l\in \overline{p-1}$. Brief calculations show that, for any natural number $j$,
\[(ba^{lp^{k-1}})^j N_k=b^ja^{lp^{k-1}\big[\frac{j(j-1)}{2}p^{m-1}+j\big]} N_k=b^ja^{jlp^{k-1}} N_k.\]
Thus the lemma is proved.
\end{proof}

Due to Theorem \ref{th13}, we need a set of representatives of all conjugacy classes of subgroups of
$\mathcal{H}$ to construct a basis of $\Omega(\mathcal{H})$. By Lemma \ref{th1}, each subgroup of
$\mathcal{H}$ contained in $N_0$ is normal. Set
\begin{equation}
M_{kl}=\bigcup_{j=0}^{p-1} (ba^{lp^{k-1}})^j N_k, \quad k\in \underline{m},l\in\mathbb{N}.
\end{equation}
It is easy to verify that $M_{kl}$ is a subgroup of $\mathcal{H}$. For later use, we remind that,
for a fixed integer $k\in \underline{m}$, $M_{kl}$ depends only on the residue class of $l$ modulo $p$.
The following lemma determines their conjugacy classes.
\begin{lem}\label{th6}
For any natural number $l$, we have,
\begin{itemize}
\item[\ding{172}] $M_{kl}$ is a normal subgroup of $\mathcal{H}$ if $k\in \underline{m-1}$,
\item[\ding{173}] $M_{ml}$ is conjugate to $M_{m0}$.
\end{itemize}
\end{lem}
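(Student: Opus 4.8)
The plan is to verify both statements by direct computation, relying on two conjugation identities derived from the defining relation $b^{-1}ab=a^{p^{m-1}+1}$. Rewriting this relation gives $aba^{-1}=ba^{p^{m-1}}$, and computing the inverse of $p^{m-1}+1$ modulo $p^m$ (namely $1-p^{m-1}$, since $(p^{m-1}+1)(1-p^{m-1})=1-p^{2m-2}\equiv 1\pmod{p^m}$ for $m\geqslant 2$) gives $bab^{-1}=a^{1-p^{m-1}}$. From the first identity one obtains by an easy induction the formula $a^{s}ba^{-s}=ba^{sp^{m-1}}$, which will drive the conjugacy argument in \ding{173}.

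For \ding{172}, I would prove normality of $M_{kl}$ (with $k\leqslant m-1$) by checking that conjugation by each of the generators $a$ and $b$ of $\mathcal{H}$ preserves $M_{kl}$. Since $M_{kl}$ is generated by $N_k$ together with the element $c=ba^{lp^{k-1}}$, and $N_k$ is already normal (Lemma \ref{th1}), it suffices to show that $aca^{-1}$ and $bcb^{-1}$ lie in $M_{kl}$. A short calculation using the two identities above gives $aca^{-1}=c\,a^{p^{m-1}}$ and $bcb^{-1}=c\,a^{-lp^{k+m-2}}$. Here the hypothesis $k\leqslant m-1$ enters precisely to guarantee $a^{p^{m-1}}=(a^{p^{k}})^{p^{m-1-k}}\in N_k$; the exponent $k+m-2$ is always at least $k$ for $m\geqslant 2$, so $a^{-lp^{k+m-2}}\in N_k$ holds automatically. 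Thus both conjugates lie in $cN_k\subset M_{kl}$, whence $aM_{kl}a^{-1}\subseteq M_{kl}$ and $bM_{kl}b^{-1}\subseteq M_{kl}$; comparing orders (conjugation is an automorphism of the finite group $M_{kl}$) upgrades these inclusions to equalities, and since $a,b$ generate $\mathcal{H}$, the subgroup $M_{kl}$ is normal.

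For \ding{173}, the key observation is that $N_m=\langle a^{p^m}\rangle$ is trivial, so $M_{ml}$ collapses to the cyclic group $\langle ba^{lp^{m-1}}\rangle$ of order $p$, while $M_{m0}=\langle b\rangle$. Applying the formula $a^{s}ba^{-s}=ba^{sp^{m-1}}$ with $s=l$ yields $a^{l}ba^{-l}=ba^{lp^{m-1}}$, and therefore $a^{l}M_{m0}a^{-l}=\langle ba^{lp^{m-1}}\rangle=M_{ml}$. This exhibits $a^{l}$ as an explicit conjugating element and proves the claim.

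The computations are short, so the only real care needed is the exponent bookkeeping modulo $p^m$: correctly inverting $p^{m-1}+1$ to obtain $bab^{-1}=a^{1-p^{m-1}}$, and tracking exactly when a power of $a$ falls inside $N_k=\langle a^{p^k}\rangle$. The comparison of the exponents $p^{m-1}$ and $p^k$ is what separates the normal case $k\leqslant m-1$ of \ding{172} from the merely conjugate case $k=m$ of \ding{173}, so this is the step I would state most carefully.
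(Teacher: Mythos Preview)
Your proposal is correct and follows essentially the same approach as the paper: verifying normality by checking that conjugation by the generators $a$ and $b$ sends $ba^{lp^{k-1}}$ back into $M_{kl}$ (using that $N_k$ is already normal), and exhibiting an explicit power of $a$ conjugating $b$ to $ba^{lp^{m-1}}$. The only cosmetic differences are the direction of conjugation (you compute $aca^{-1}$, $bcb^{-1}$, $a^{l}ba^{-l}$, whereas the paper computes $a^{-1}ca$, $b^{-1}cb$, $a^{-l}(ba^{lp^{m-1}})a^{l}$) and your preliminary step of inverting $p^{m-1}+1$ modulo $p^m$, which the paper avoids by conjugating the other way.
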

\begin{proof}
\ding{172} Let $k\in \underline{m-1}$. Then short calculations show
\begin{eqnarray}
a^{-1}(ba^{lp^{k-1}})a & = & ba^{-(p^{m-1}+1)}a^{lp^{k-1}}a \nonumber\\
& = & ba^{lp^{k-1}-p^{m-1}} \nonumber\\
& = & ba^{lp^{k-1}}(a^{p^k})^{-p^{m-1-k}}, \\
b^{-1}(ba^{lp^{k-1}})b & = & ba^{lp^{k-1}(p^{m-1}+1)} \nonumber\\
& = & ba^{lp^{k-1}}(a^{p^k})^{lp^{m-2}}.
\end{eqnarray}
This implies both $a^{-1}(ba^{lp^{k-1}})a$ and $b^{-1}(ba^{lp^{k-1}})b$ belong to $M_{kl}$.
Thus $M_{kl}$ is a normal subgroup since $M_{kl}$ is generated by $N_k$ and $ba^{lp^{k-1}}$.
\ding{173} Note that $N_m$ is the trivial subgroup, so $M_{ml}$ is generated by $ba^{lp^{m-1}}$.
Brief calculations show
\begin{equation}\label{e2}
a^{-l}(ba^{lp^{m-1}})a^l=ba^{-l(p^{m-1}+1)}a^{lp^{m-1}}a^l=b,
\end{equation}
which implies $M_{ml}$ is conjugate to $M_{m0}$.
\end{proof}

Thanks to Lemma \ref{th1} and Lemma \ref{th6}, we get a basis of $\Omega(\mathcal{H})$, hence a basis of
$\Delta(\mathcal{H})$. For convenience, denote $[\mathcal{H}/N_i],[\mathcal{H}/M_{kl}]$
and $[\mathcal{H}/\mathcal{H}]$ by $\alpha_i,\beta_{kl}$ and $\varepsilon$, respectively, where
$i\in \overline{m}$, $k\in \underline{m}$, $l$ is a natural number.
\begin{thm}
The underlying group of $\Omega(\mathcal{H})$ is the free abelian group with basis
\begin{equation}\label{e3}
\{\alpha_i|i\in \overline{m}\}\cup\{\beta_{kl}|k\in \underline{m-1},l\in \overline{p-1}\}\cup\{\beta_{m0},\varepsilon\}.
\end{equation}
\end{thm}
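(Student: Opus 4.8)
The plan is to invoke Theorem \ref{th13}, which states that $\Omega(\mathcal{H})$ is free abelian on the classes $[\mathcal{H}/K]$ as $K$ runs through one representative of each conjugacy class of subgroups of $\mathcal{H}$. So it suffices to show that
\[
\{N_i \mid i \in \overline{m}\} \cup \{M_{kl} \mid k \in \underline{m-1},\, l \in \overline{p-1}\} \cup \{M_{m0},\, \mathcal{H}\}
\]
is a complete and irredundant set of such representatives; translating through the abbreviations $\alpha_i$, $\beta_{kl}$ and $\varepsilon$ then produces exactly the basis \eqref{e3}.

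For completeness, every subgroup of $\mathcal{H}$ is either $\mathcal{H}$ itself or a proper subgroup. By Lemma \ref{th1}, a proper subgroup contained in $N_0$ equals some $N_i$ with $i\in\overline{m}$, while a proper subgroup not contained in $N_0$ has the form $M_{kl}$ with $k\in\underline{m}$; reducing $l$ modulo $p$ we may take $l\in\overline{p-1}$. When $k=m$, Lemma \ref{th6}\,\ding{173} shows that every $M_{ml}$ is conjugate to $M_{m0}$, so $M_{m0}$ alone represents that family. Hence every subgroup of $\mathcal{H}$ is conjugate to one appearing in the displayed list.

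For irredundancy I would rule out all remaining coincidences of conjugacy classes. Since $N_0$ is normal in $\mathcal{H}$, conjugation preserves containment in $N_0$; as each $N_i\subseteq N_0$ whereas each $M_{kl}\not\subseteq N_0$, no $N_i$ is conjugate to any $M_{kl}$, and neither kind is conjugate to $\mathcal{H}$. The subgroups $N_i$ are normal by Lemma \ref{th1}\,\ding{172} and satisfy $|N_i|=p^{m-i}$, so they are pairwise non-conjugate. By Lemma \ref{th6}\,\ding{172} each $M_{kl}$ with $k\in\underline{m-1}$ is normal, hence conjugate only to itself, so it remains to check these are pairwise distinct: distinct $k$ give distinct orders $|M_{kl}|=p^{m-k+1}$, while for fixed $k$ an equality $M_{kl}=M_{kl'}$ would force $ba^{lp^{k-1}}\in M_{kl'}$, i.e. $a^{(l-l')p^{k-1}}\in N_k=\langle a^{p^k}\rangle$, which gives $l\equiv l' \pmod p$ and hence $l=l'$ on $\overline{p-1}$. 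Finally the representative $M_{m0}$ has order $p$, strictly smaller than every $|M_{kl}|$ with $k\le m-1$, so it too is non-conjugate to all of them.

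I expect the distinctness of the $M_{kl}$ for a fixed $k$ to be the only genuinely computational point; everything else is bookkeeping layered on Lemmas \ref{th1} and \ref{th6}. Once completeness and irredundancy are in hand, Theorem \ref{th13} immediately yields that the corresponding classes $[\mathcal{H}/K]$ form a $\mathbb{Z}$-basis of $\Omega(\mathcal{H})$, which is precisely the set \eqref{e3}.
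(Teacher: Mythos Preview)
Your proposal is correct and follows exactly the same route as the paper: verify that the listed subgroups form a complete irredundant set of conjugacy class representatives and then apply Theorem~\ref{th13}. The paper merely asserts this verification is easy (appealing to the normal form $b^ua^v$), whereas you carry out the completeness and irredundancy checks explicitly via Lemmas~\ref{th1} and~\ref{th6}; the substance is identical.
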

\begin{proof}
It is easy to verify that each element of $\mathcal{H}$ has a unique expression as $b^ua^v$,
$u\in \overline{p-1}, v\in \overline{p^m-1}$. By this one can easily verify that
\begin{equation}
\{N_i|i\in \overline{m}\}\cup\{M_{kl}|k\in \underline{m-1},l\in \overline{p-1}\}\cup\{M_{m0},\mathcal{H}\}
\end{equation}
is a set of representatives of all conjugacy classes of subgroups of $\mathcal{H}$. Then the theorem follows from Theorem \ref{th13}.
\end{proof}
\begin{cor}\label{th7}
$\Delta(\mathcal{H})$ is, additively, the free abelian group based on
\begin{equation}\label{e6}
\{\alpha_i|i\in \overline{m}\}\cup\{\beta_{kl}|k\in \underline{m-1},l\in \overline{p-1}\}\cup\{\beta_{m0}\}.
\end{equation}
\end{cor}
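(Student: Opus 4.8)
The plan is to obtain this corollary as an immediate consequence of Corollary \ref{th14} together with the basis of $\Omega(\mathcal{H})$ established in the preceding theorem. Corollary \ref{th14} asserts that for any finite group $G$, the augmentation ideal $\Delta(G)$ is freely generated over $\mathbb{Z}$ by those $[G/K]$ with $K$ a \emph{proper} subgroup ranging over the fixed conjugacy-class representatives. Thus the only task is to identify, among the basis elements of $\Omega(\mathcal{H})$, which ones correspond to proper subgroups and which one corresponds to $\mathcal{H}$ itself.

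First I would recall the basis of $\Omega(\mathcal{H})$ from the theorem, namely $\{\alpha_i \mid i \in \overline{m}\} \cup \{\beta_{kl} \mid k \in \underline{m-1}, l \in \overline{p-1}\} \cup \{\beta_{m0}, \varepsilon\}$, where by construction $\varepsilon = [\mathcal{H}/\mathcal{H}]$ is the unique element attached to the subgroup $\mathcal{H}$, while every $\alpha_i = [\mathcal{H}/N_i]$, $\beta_{kl} = [\mathcal{H}/M_{kl}]$ and $\beta_{m0} = [\mathcal{H}/M_{m0}]$ is attached to a proper subgroup. Then I would verify that each of $N_i$ $(i\in\overline{m})$, $M_{kl}$ and $M_{m0}$ is indeed proper: $N_0 = \langle a\rangle$ has order $p^m$, so every $N_i$ has order at most $p^m < p^{m+1} = |\mathcal{H}|$; and each $M_{kl}$ contains $N_k$ with index $p$, hence has order $p^{m-k+1} \leqslant p^m$. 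This confirms that $\varepsilon$ is the only basis vector to be removed.

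Applying Corollary \ref{th14} then yields directly that $\Delta(\mathcal{H})$ is the free abelian group based on the set in \eqref{e6}, which is exactly the basis of $\Omega(\mathcal{H})$ with $\varepsilon$ deleted. There is no substantive obstacle here; the only point requiring the slightest care is the bookkeeping check that no basis element other than $\varepsilon$ corresponds to the full group, and this follows at once from the explicit orders of the subgroups $N_i$ and $M_{kl}$ recorded above.
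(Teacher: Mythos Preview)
Your proposal is correct and follows exactly the intended route: the paper states this as an immediate corollary (with no separate proof) of the preceding theorem together with Corollary~\ref{th14}, and your argument simply makes explicit the observation that $\varepsilon=[\mathcal{H}/\mathcal{H}]$ is the only basis element of $\Omega(\mathcal{H})$ attached to a non-proper subgroup. The extra bookkeeping you do on the orders of $N_i$ and $M_{kl}$ is fine but not strictly needed, since Lemma~\ref{th1} already classified these as the \emph{proper} subgroups of $\mathcal{H}$.
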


Now we determine the multiplication in $\Delta(\mathcal{H})$.
\begin{lem}\label{th4}
For any integers $i,j\in \overline{m}$, $k\in\underline{m}$, $r\in \underline{m-1}$ and $l,s\in \overline{p-1}$, we have,
\begin{itemize}
\item[\ding{172}] $\alpha_i\alpha_j=p^{\min\{i,j\}+1}\alpha_{\max\{i,j\}}$,
\item[\ding{173}] $\alpha_i\beta_{kl}=p^{\min\{i,k\}}\alpha_{\max\{i,k\}}$,
\item[\ding{174}] $\beta_{kl}\beta_{rs}=\left\{
\begin{array}{ll}
p^r\beta_{kl}, & \mbox{if }\, \{k>r,s=0\} \mbox{ or } \{k=r,l=s\}, \\
p^{r-1}\alpha_k, & \mbox{if }\, \{k>r,s\in \underline{p-1}\} \mbox{ or } \{k=r, l\neq s\},
\end{array}\right.$
\item[\ding{175}] $\beta_{m0}^2=p^{m-1}\beta_{m0}+(p^{m-1}-p^{m-2})\alpha_m$.
\end{itemize}
\end{lem}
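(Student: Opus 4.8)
The plan is to handle parts \ding{172}--\ding{174} uniformly through Lemma \ref{th2}, and to treat \ding{175} by a separate argument, since the subgroup $M_{m0}=\langle b\rangle$ appearing there is \emph{not} normal (Lemma \ref{th6}\ding{173}) and so lies outside the scope of Lemma \ref{th2}. Throughout I will use the orders $|N_i|=p^{m-i}$ and $|M_{kl}|=p\,|N_k|=p^{m-k+1}$, read directly off the definitions, together with $|\mathcal{H}|=p^{m+1}$.

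For \ding{172}--\ding{174}, the key observation is that in every product $[\mathcal{H}/K][\mathcal{H}/L]$ appearing there at least one factor is normal: $N_i,N_j$ are normal by Lemma \ref{th1}\ding{172}, while $M_{rs}$ is normal precisely because $r\in\underline{m-1}$ (Lemma \ref{th6}\ding{172}). Taking the normal subgroup as $L$, Lemma \ref{th2} reduces each identity to a computation of $K\cap L$ and of $|KL|=|K|\,|L|/|K\cap L|$ via
\[[\mathcal{H}/K][\mathcal{H}/L]=\frac{|\mathcal{H}|}{|KL|}\,[\mathcal{H}/(K\cap L)]=\frac{p^{m+1}}{|KL|}\,[\mathcal{H}/(K\cap L)].\]

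Carrying this out: for \ding{172} I would use $N_i\cap N_j=N_{\max\{i,j\}}$ and $N_iN_j=N_{\min\{i,j\}}$; for \ding{173} the point is that $M_{kl}\cap N_0=N_k$ (established inside the proof of Lemma \ref{th1}), whence $M_{kl}\cap N_i=N_{\max\{i,k\}}$. For \ding{174} I would write elements explicitly as $b^j a^{jlp^{k-1}+tp^k}$, so that matching $b$-exponents forces $j=j'$ and one then compares $a$-exponents modulo $p^m$. In the case $\{k>r,\,s=0\}$ this yields the containment $M_{kl}\subseteq M_{r0}=\langle b\rangle N_r$ (every $a$-exponent is divisible by $p^{k-1}\geqslant p^r$), so $K\cap L=M_{kl}$ and the product is $p^r\beta_{kl}$; in the remaining cases a coprimality argument ($s\not\equiv0$, or $l\not\equiv s\pmod p$ using $l,s\in\overline{p-1}$) forces $j=0$ and collapses the intersection to $N_k$, giving $p^{r-1}\alpha_k$. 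The sub-case $k=r,\,l=s$ is simply $\beta_{kl}^2$ with $M_{kl}$ normal. These steps are routine once the intersections are pinned down.

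The genuine obstacle is \ding{175}. Since $M_{m0}=\langle b\rangle$ is non-normal, I would instead decompose the $\mathcal{H}$-set $(\mathcal{H}/M_{m0})\times(\mathcal{H}/M_{m0})$ into orbits and invoke Lemma \ref{th11}: the orbits are indexed by the double cosets $\langle b\rangle\backslash\mathcal{H}/\langle b\rangle$, and the orbit through $(M_{m0},gM_{m0})$ is isomorphic to $\mathcal{H}/(\langle b\rangle\cap g\langle b\rangle g^{-1})$. From $a^vba^{-v}=ba^{vp^{m-1}}$ one gets $a^v\langle b\rangle a^{-v}=\langle ba^{vp^{m-1}}\rangle$, and the double coset of $a^v$ carries the $a$-parts $v(1+jp^{m-1})$, $0\leqslant j\leqslant p-1$. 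Hence when $p\mid v$ the conjugate equals $\langle b\rangle$, the double coset is small, and the stabilizer is $M_{m0}$ (orbit $\mathcal{H}/M_{m0}$); when $p\nmid v$ the conjugate is a distinct subgroup of order $p$, so the intersection is trivial and the stabilizer is $N_m$ (orbit $\mathcal{H}/N_m$). Counting $p^{m-1}$ small double cosets and $p^{m-2}(p-1)$ large ones then produces $\beta_{m0}^2=p^{m-1}\beta_{m0}+(p^{m-1}-p^{m-2})\alpha_m$. The delicate point is exactly this double-coset bookkeeping, which I would confirm by the order check $p^{m-1}\cdot p+p^{m-2}(p-1)\cdot p^2=p^{m+1}=|\mathcal{H}|$.
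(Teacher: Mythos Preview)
Your proposal is correct and follows essentially the same approach as the paper: parts \ding{172}--\ding{174} are handled via Lemma~\ref{th2} (you supply the intersection computations the paper leaves implicit), and part \ding{175} is handled by computing stabilizers in the product $\mathcal{H}$-set $(\mathcal{H}/M_{m0})\times(\mathcal{H}/M_{m0})$. The only cosmetic difference is that for \ding{175} the paper counts stabilizers over all pairs $(a^vM_{m0},a^wM_{m0})$ directly (finding $\mathcal{H}_x=M_{mv}\cap M_{mw}$, which is $M_{mv}$ or $N_m$ according as $v\equiv w\pmod p$ or not), whereas you organize the same computation via the double-coset parametrization of orbits; the underlying calculation and the resulting counts coincide.
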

\begin{proof}
\ding{172}, \ding{173} and \ding{174} are direct corollaries of Lemma \ref{th2}. For \ding{175},
it is easy to verify that
\begin{equation}
\mathcal{H}/M_{m0}=\{a^vM_{m0}|v\in \overline{p^m-1}\}.
\end{equation}
Let $x=(a^vM_{m0},a^wM_{m0})\in \mathcal{H}/M_{m0} \times \mathcal{H}/M_{m0}$. From (\ref{e2}) it follows that
\begin{equation}
\mathcal{H}_x=(a^vM_{m0}a^{-v}) \cap (a^wM_{m0}a^{-w})=M_{mv}\cap M_{mw},
\end{equation}
where $\mathcal{H}_x$ is the stabilizer of $x$. Recall that, for any natural number $l$, $M_{ml}$ depends only
on the residue class of $l$ modulo $p$. This implies
\begin{equation}
\mathcal{H}_x=\left\{\begin{array}{ll}
M_{mv}, & \mbox{if } v\equiv w \,(\mbox{mod } p),\\
\{1\}=N_m, & \mbox{if } v\not\equiv w \,(\mbox{mod } p).
\end{array}\right.
\end{equation}
Thus there are exactly $p^{2m-1}$ elements of $\mathcal{H}/M_{m0} \times \mathcal{H}/M_{m0}$ whose orbits are
isomorphic to $\mathcal{H}/M_{m0}$, and the orbits of the rest elements are isomorphic to $\mathcal{H}/N_m$.
Note that the cardinalities of $\mathcal{H}/M_{m0}$ and $\mathcal{H}/N_m$ are $p^m$ and $p^{m+1}$, respectively.
Then the lemma follows.
\end{proof}

\section{Main Results}
In this section, we construct an explicit $\mathbb{Z}$-basis for $\Delta^n(\mathcal{H})$ and determine
the isomorphism class of $Q_n(\mathcal{H})$ for each positive integer $n$. For later use, we remind the readers
that for a fixed $k\in\underline{m}$, $\beta_{kl}$ depends only on the residue class of $l$ modulo $p$. Moreover,
$\beta_{ml}$ equals $\beta_{m0}$ for any natural number $l$.

\begin{thm}\label{th10}
For any positive integer $n$, $\Delta^{n+1}(\mathcal{H})$ is, additively, the free abelian group based on
\begin{equation}\label{e8}
\{p^n\alpha_0\}\cup\{p^{n-1}\alpha_i|i\in \underline{m}\}\cup
\{p^n\beta_{kl}|k\in \underline{m-1},l\in \overline{p-1}\}\cup\{p^n\beta_{m0}\}.
\end{equation}
\end{thm}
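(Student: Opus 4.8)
The plan is to argue by induction on $n$, using $\Delta^{n+1}(\mathcal{H})=\Delta(\mathcal{H})\cdot\Delta^n(\mathcal{H})$ together with the multiplication table of Lemma \ref{th4}. Write $S_n$ for the set displayed in (\ref{e8}). A quick count shows that $S_n$ has exactly as many elements as the basis of $\Delta(\mathcal{H})$ given in Corollary \ref{th7}, and by Lemma \ref{th8} this is the common free rank of every power $\Delta^j(\mathcal{H})$. Hence Lemma \ref{th3} reduces the statement to proving that $S_n$ generates $\Delta^{n+1}(\mathcal{H})$; I will in fact establish the sharper equality $\Delta^{n+1}(\mathcal{H})=\mathbb{Z}S_n$ by proving both inclusions, after which Lemma \ref{th3} upgrades the generating set $S_n$ to a basis.

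For the base case $n=1$ I would compute $\Delta^2(\mathcal{H})=\Delta(\mathcal{H})^2$ directly. It is spanned by the products of the basis elements of $\Delta(\mathcal{H})$ from Corollary \ref{th7}, and Lemma \ref{th4} evaluates every such product: for instance $\alpha_0^2=p\alpha_0$, $\alpha_0\beta_{kl}=\alpha_k$ (so each $\alpha_k$ with $k\in\underline{m}$ is produced), $\beta_{1l}^2=p\beta_{1l}$, and $\beta_{m0}\beta_{10}=p\beta_{m0}$. One checks that every such product lies in $\mathbb{Z}S_1$ and that every generator of $S_1$ is realized, giving $\Delta^2(\mathcal{H})=\mathbb{Z}S_1$.

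For the inductive step, assume $n\geqslant 2$ and $\Delta^n(\mathcal{H})=\mathbb{Z}S_{n-1}$. The inclusion $\mathbb{Z}S_n\subseteq\Delta^{n+1}(\mathcal{H})$ is obtained by writing each generator of $S_n$ as a product of a $\Delta(\mathcal{H})$-basis element with a generator of $S_{n-1}$: one has $p^n\alpha_0=\alpha_0\cdot(p^{n-1}\alpha_0)$; next $p^{n-1}\alpha_i=\alpha_0\cdot(p^{n-1}\beta_{i0})$ for $i\in\underline{m-1}$ and $p^{n-1}\alpha_m=\alpha_0\cdot(p^{n-1}\beta_{m0})$ by \ding{173}; then $p^n\beta_{kl}=\beta_{kl}\cdot(p^{n-1}\beta_{10})$ for $k\geqslant 2$ and $p^n\beta_{1l}=\beta_{1l}\cdot(p^{n-1}\beta_{1l})$ by \ding{174}; and finally $p^n\beta_{m0}=\beta_{m0}\cdot(p^{n-1}\beta_{10})$. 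For the reverse inclusion $\Delta^{n+1}(\mathcal{H})=\Delta(\mathcal{H})\cdot\mathbb{Z}S_{n-1}\subseteq\mathbb{Z}S_n$ I would run through the finite list of products of a $\Delta(\mathcal{H})$-basis element with an $S_{n-1}$-generator and verify, via Lemma \ref{th4}, that each lands in $\mathbb{Z}S_n$. The governing phenomenon is that multiplying by $\alpha_0$ converts a $\beta$ into the corresponding $\alpha$ while preserving the $p$-power, whereas multiplying by any $\alpha_i$ with $i\geqslant 1$ or by a second $\beta$ only raises the $p$-power; the exponents in $S_{n-1}$ are calibrated so that the tightest outputs (those coming from $\alpha_0$ and from the factors $\beta_{\cdot 0}$ with $r=1$) hit, but never fall below, the thresholds $p^n$ (on the $\beta$'s and on $\alpha_0$) and $p^{n-1}$ (on $\alpha_i$, $i\geqslant 1$) imposed by $S_n$.

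The one product that is not a single scaled basis element, and therefore the main point to watch, is $\beta_{m0}\cdot(p^{n-1}\beta_{m0})$, which by \ding{175} equals $p^{n+m-2}\beta_{m0}+(p-1)p^{n+m-3}\alpha_m$; here I must check separately that the first summand is a multiple of $p^n\beta_{m0}$ and the second a multiple of $p^{n-1}\alpha_m$, both of which hold under the standing assumption $m\geqslant 3$. Once both inclusions are in hand we obtain $\Delta^{n+1}(\mathcal{H})=\mathbb{Z}S_n$, and the rank count together with Lemma \ref{th3} shows that $S_n$ is a basis, completing the induction.
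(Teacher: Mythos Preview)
Your proposal is correct and follows essentially the same approach as the paper: both argue by induction on $n$, reduce via the rank count (Lemma \ref{th8}, Lemma \ref{th3}, Corollary \ref{th7}) to showing that $S_n$ generates $\Delta^{n+1}(\mathcal{H})$, and then verify this by running the products from Lemma \ref{th4} against the inductive basis of $\Delta^n(\mathcal{H})$. The only cosmetic difference is that you organize the inductive step as two separate inclusions and single out the $\beta_{m0}^2$ term explicitly, whereas the paper performs one long chain of equalities; the content is the same.
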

\begin{proof}
Note that (\ref{e8}) has the same cardinality as (\ref{e6}), so due to Lemma \ref{th8}, Lemma \ref{th3} and
Corollary \ref{th7}, we just need to show it generates $\Delta^n(\mathcal{H})$. We prove this by induction on $n$.
When $n=1$, brief calculations show
\begin{eqnarray}
\Delta^2(\mathcal{H}) & = & \Delta(\mathcal{H})\Delta(\mathcal{H}) \nonumber\\
& = & \mathbb{Z}\{\alpha_i\alpha_j | i,j\in\overline{m},i\leqslant j\}+
      \mathbb{Z}\{\alpha_i\beta_{kl} | i\in\overline{m},k\in\underline{m},l\in\overline{p-1}\}+ \nonumber\\
&   & \mathbb{Z}\{\beta_{kl}\beta_{rs} | k\in\underline{m},r\in\underline{m-1}, l,s\in\overline{p-1}, k\geqslant r\}+
      \mathbb{Z}\{\beta_{m0}^2\} \nonumber\\
& = & \mathbb{Z}\{p^{i+1}\alpha_j | i,j\in\overline{m},i\leqslant j\}+
      \mathbb{Z}\{p^{\min\{i,k\}}\alpha_{\max\{i,k\}} | i\in\overline{m},k\in\underline{m}\}+ \nonumber\\
&   & \mathbb{Z}\{p^r\beta_{kl} | k\in\underline{m},r\in\underline{m-1},l\in\overline{p-1},k\geqslant r\}+ \nonumber\\
&   & \mathbb{Z}\{p^{r-1}\alpha_k | k\in\underline{m},r\in\underline{m-1},k\geqslant r\}+ \nonumber\\
&   & \mathbb{Z}\{p^{m-1}\beta_{m0}+(p^{m-1}-p^{m-2})\alpha_m\} \nonumber\\
& = & \mathbb{Z}\{p\alpha_j | j\in\overline{m}\}+\mathbb{Z}\{\alpha_k | k\in\underline{m}\}+
      \mathbb{Z}\{p\beta_{kl} | k\in\underline{m}, l\in\overline{p-1}\}+ \nonumber \\
&   & \mathbb{Z}\{\alpha_k | k\in\underline{m}\}+\mathbb{Z}\{p^{m-1}\beta_{m0}+(p^{m-1}-p^{m-2})\beta_{m0}\} \nonumber \\
& = & \mathbb{Z}\{p\alpha_0\}+\mathbb{Z}\{\alpha_i | i\in\underline{m}\}+\mathbb{Z}\{p\beta_{kl} | k\in\underline{m-1},
      l\in\overline{p-1}\}+ \nonumber\\
&   & \mathbb{Z}\{p\beta_{m0}\},
\end{eqnarray}
as required. When $n\geqslant 2$, assume the theorem holds for $n-1$, which means the underlying group of
$\Delta^n(\mathcal{H})$ is the free abelian group with basis
\begin{eqnarray}
&& \{p^{n-1}\alpha_0\}\cup\{p^{n-2}\alpha_j|j\in \underline{m}\}\cup
   \{p^{n-1}\beta_{rs}|r\in \underline{m-1},s\in \overline{p-1}\}\cup \nonumber \\
&& \{p^{n-1}\beta_{m0}\}.
\end{eqnarray}
From this it follows that
\begin{eqnarray}
\Delta^{n+1}(\mathcal{H})
&=& \Delta(\mathcal{H})\Delta^n(\mathcal{H}) \nonumber\\
&=& \mathbb{Z}\{p^{n-1}\alpha_i\alpha_0|i\in\overline{m}\}+
    \mathbb{Z}\{p^{n-2}\alpha_i\alpha_j|i\in\overline{m},j\in \underline{m}\}+ \nonumber\\
& & \mathbb{Z}\{p^{n-1}\alpha_i\beta_{rs}|i\in\overline{m}, r\in \underline{m-1},s\in \overline{p-1}\}+ \nonumber\\
& & \mathbb{Z}\{p^{n-1}\alpha_i\beta_{m0}|i\in\overline{m}\}+
    \mathbb{Z}\{p^{n-1}\beta_{kl}\alpha_0|k\in\underline{m},l\in\overline{p-1}\}+ \nonumber\\
& & \mathbb{Z}\{p^{n-2}\beta_{kl}\alpha_j|j,k\in\underline{m},l\in\overline{p-1}\}+ \nonumber\\
& & \mathbb{Z}\{p^{n-1}\beta_{kl}\beta_{rs}|k\in\underline{m},r\in \underline{m-1},l,s\in \overline{p-1}\}+ \nonumber\\
& & \mathbb{Z}\{p^{n-1}\beta_{kl}\beta_{m0}|k\in\underline{m},l\in\overline{p-1}\} \nonumber\\
&=& \mathbb{Z}\{p^{n-1}\alpha_0^2\}+
    \mathbb{Z}\{p^{n-2}\alpha_i\alpha_j|i\in\overline{m},j\in \underline{m}\}+ \nonumber\\
& & \mathbb{Z}\{p^{n-1}\beta_{kl}\alpha_0|k\in\underline{m},l\in\overline{p-1}\}+ \nonumber\\
& & \mathbb{Z}\{p^{n-2}\beta_{kl}\alpha_j|j,k\in\underline{m},l\in\overline{p-1}\}+ \nonumber\\
& & \mathbb{Z}\{p^{n-1}\beta_{kl}\beta_{rs}|k\in\underline{m},r\in \underline{m-1},l,s\in \overline{p-1}\}+
    \mathbb{Z}\{p^{n-1}\beta_{m0}^2\} \nonumber\\
&=& \mathbb{Z}\{p^n\alpha_0\}+\mathbb{Z}\{p^{n+i-1}\alpha_j|i\in\overline{m},j\in\underline{m},i\leqslant j\}+ \nonumber\\
& & \mathbb{Z}\{p^{n-1}\alpha_k|k\in\underline{m}\}+
    \mathbb{Z}\{p^{n+k-2}\alpha_j|j,k\in\underline{m},j\geqslant k\}+ \nonumber\\
& & \mathbb{Z}\{p^{n+r-1}\beta_{kl} | k\in\underline{m},r\in\underline{m-1},l\in\overline{p-1},k\geqslant r\}+ \nonumber\\
& & \mathbb{Z}\{p^{n+r-2}\alpha_k | k\in\underline{m},r\in\underline{m-1},k\geqslant r\}+ \nonumber\\
& & \mathbb{Z}\{p^{n-1}(p^{m-1}\beta_{m0}+(p^{m-1}-p^{m-2})\alpha_m)\} \nonumber\\
&=& \mathbb{Z}\{p^n\alpha_0\}+\mathbb{Z}\{p^{n-1}\alpha_j|j\in\underline{m}\}+\mathbb{Z}\{p^{n-1}\alpha_k|k\in\underline{m}\}+ \nonumber\\
& & \mathbb{Z}\{p^{n-1}\alpha_j|j\in\underline{m}\}+\mathbb{Z}\{p^n\beta_{kl} | k\in\underline{m},l\in\overline{p-1}\}+ \nonumber\\
& & \mathbb{Z}\{p^{n-1}\alpha_k | k\in\underline{m}\}+\mathbb{Z}\{p^{m-2}(p^n\beta_{m0}+(p^n-p^{n-1})\alpha_m)\} \nonumber\\
&=& \mathbb{Z}\{p^n\alpha_0\}+\mathbb{Z}\{p^{n-1}\alpha_i|i\in\underline{m}\}+ \nonumber\\
& & \mathbb{Z}\{p^n\beta_{kl} | k\in\underline{m-1},l\in\overline{p-1}\}+\mathbb{Z}\{p^n\beta_{m0}\}.
\end{eqnarray}
Thus the theorem is proved.
\end{proof}
\begin{thm}
For any positive integer $n$,
\begin{equation}
Q_n(\mathcal{H})\cong \left\{
\begin{array}{ll}
(C_p)^{(m-1)p+2}, & n=1, \\
(C_p)^{(m-1)p+m+2}, & n\geqslant 2.
\end{array}\right.
\end{equation}
\end{thm}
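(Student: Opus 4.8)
The plan is to read off $Q_n(\mathcal{H})=\Delta^n(\mathcal{H})/\Delta^{n+1}(\mathcal{H})$ directly from the explicit bases already established, by observing that these consecutive bases are \emph{aligned}: each listed generator of $\Delta^{n+1}(\mathcal{H})$ is a scalar multiple of a single corresponding generator of $\Delta^n(\mathcal{H})$, so the inclusion $\Delta^{n+1}(\mathcal{H})\subset\Delta^n(\mathcal{H})$ is diagonal and the quotient splits as a direct sum of cyclic groups whose orders are read off from the ratios of $p$-powers.

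First I would treat $n\geqslant 2$. Applying Theorem \ref{th10} with $n-1$ in place of $n$, $\Delta^n(\mathcal{H})$ is free abelian on
\[\{p^{n-1}\alpha_0\}\cup\{p^{n-2}\alpha_i\mid i\in\underline{m}\}\cup\{p^{n-1}\beta_{kl}\mid k\in\underline{m-1},l\in\overline{p-1}\}\cup\{p^{n-1}\beta_{m0}\},\]
while Theorem \ref{th10} itself gives $\Delta^{n+1}(\mathcal{H})$ as free abelian on the list $p^n\alpha_0$, $p^{n-1}\alpha_i$ $(i\in\underline{m})$, $p^n\beta_{kl}$, $p^n\beta_{m0}$, which is exactly the previous list with every generator multiplied by $p$. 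Hence the quotient splits as a direct sum of factors $\mathbb{Z}g/\mathbb{Z}(pg)\cong C_p$, one for each listed generator, and counting the generators gives $1+m+(m-1)p+1=(m-1)p+m+2$ copies of $C_p$.

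Next, for $n=1$ I would compare the basis of $\Delta(\mathcal{H})$ from Corollary \ref{th7} with the basis of $\Delta^2(\mathcal{H})$ from Theorem \ref{th10} (taking $n=1$). The sole difference from the previous case is that $\Delta^2(\mathcal{H})$ contains the generators $\alpha_i$ $(i\in\underline{m})$ with coefficient $1$ rather than $p$. Thus these $m$ basis vectors of $\Delta(\mathcal{H})$ already lie in $\Delta^2(\mathcal{H})$ and contribute trivial factors, whereas $\alpha_0$, each $\beta_{kl}$, and $\beta_{m0}$ satisfy $pg\in\Delta^2(\mathcal{H})$ but $g\notin\Delta^2(\mathcal{H})$ and so each contribute a $C_p$. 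This leaves $1+(m-1)p+1=(m-1)p+2$ copies of $C_p$, matching the stated value.

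The only point demanding care is verifying that the two consecutive bases are genuinely aligned generator-for-generator, so that the presentation matrix of the inclusion is diagonal and no off-diagonal interaction can merge factors or push an invariant factor above $p$; once this is confirmed the conclusion is immediate and needs no further structure theory. I do not anticipate a real obstacle here, since the bases in Corollary \ref{th7} and Theorem \ref{th10} were phrased precisely to make the successive powers of $\Delta(\mathcal{H})$ compatible in this diagonal fashion.
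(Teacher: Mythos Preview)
Your proposal is correct and is precisely the intended argument: the paper's own proof is the single sentence ``It is a direct corollary of Theorem \ref{th10},'' and what you have written is the natural unpacking of that corollary via the diagonal comparison of the aligned bases from Corollary \ref{th7} and Theorem \ref{th10}. The only remark is that your caution about off-diagonal terms is unnecessary here, since by construction each basis element of $\Delta^{n+1}(\mathcal{H})$ is literally a $p$-power multiple of the corresponding basis element of $\Delta^n(\mathcal{H})$.
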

\begin{proof}
It is a direct corollary of Theorem \ref{th10}.
\end{proof}

\end{document}